\definecolor{webcolor}{rgb}{0.8,0,0.2}
\definecolor{webbrown}{rgb}{.6,0,0}
\numberwithin{equation}{section}
\newcommand{\textcyr}[1]{%
 {\fontencoding{OT2}\fontfamily{cmr}\fontseries{m}\fontshape{n}\selectfont #1}}
\newcommand{\Sha}{{\mbox{\textcyr{Sh}}}}
\newcommand{\FF}{\mathbb F}
\newcommand{\QQ}{\mathbb Q}
\newcommand{\RR}{\mathbb R}
\newcommand{\ZZ}{\mathbb Z}
\newcommand{\calP}{\mathcal P}
\newcommand{\calE}{\mathcal E}
\def\Sel{\operatorname{Sel}} 
\def\Gal{\operatorname{Gal}}
\def\ord{\operatorname{ord}}
\newcommand{\tors}{{\operatorname{tors}}}
\newcommand{\legendre}[2]{\big(\tfrac{#1}{#2}\big)}
\newcommand{\defi}[1]{\textsf{#1}} 
\newcommand\blank[1]{}
\def\bbar#1{\setbox0=\hbox{$#1$}\dimen0=.2\ht0 \kern\dimen0 
\overline{\kern-\dimen0 #1}}
\newcommand{\Qbar}{{\overline{\mathbb Q}}}
\newtheorem{thm}{Theorem}[section]
\newtheorem{lemma}[thm]{Lemma}
\theoremstyle{definition}
\theoremstyle{remark}
\newenvironment{romanenum}{\hfill \begin{enumerate} }{\end{enumerate}}
\begin{document}

\title{There are infinitely many elliptic curves over the rationals of rank $2$}
\subjclass[2020]{Primary 11G18; Secondary 14J27}

\author{David Zywina}
\address{Department of Mathematics, Cornell University, Ithaca, NY 14853, USA}
\email{zywina@math.cornell.edu}

\begin{abstract}
We show that there are infinitely many elliptic curves $E/\QQ$, up to isomorphism over $\Qbar$, for which the finitely generated group $E(\QQ)$ has rank exactly $2$.    Our elliptic curves are given by  explicit models and their rank is shown to be $2$ via a $2$-descent.  That there are infinitely many such elliptic curves makes use of a theorem of Tao and Ziegler.
\end{abstract}

\maketitle

\section{Introduction}

For an elliptic curve $E$ over $\QQ$, the abelian group $E(\QQ)$ consisting of the rational points of $E$ is finitely generated.   The possible torsion subgroups of $E(\QQ)$ that can occur have been classified by Mazur \cite[Theorem~8]{MR488287}.
On the other hand, the \defi{rank} of $E$, i.e., the rank of $E(\QQ)$, is a more mysterious invariant.     For each integer $0\leq r \leq 1$, it is known that there are infinitely many elliptic curves over $\QQ$ of rank $r$ (for example, see \cite[Corollary X.6.2.1]{Silverman} and \cite{MR870738}).   Loosely, we expect ``most'' elliptic curves over $\QQ$ to have rank $0$ or $1$.

There are in fact infinitely many elliptic curve over $\QQ$ that have rank \emph{at least} $2$.  Indeed, if one takes a nonisotrivial elliptic curve $\calE$ over the function field $\QQ(T)$ for which $\calE(\QQ(T))$ has rank $2$, then specialization at all but finitely many $t\in \QQ$ will produce an elliptic curve over $\QQ$ of rank \emph{at least} $2$, cf.~\cite{MR703488}.
Our main result gives what seems to be the first integer $r\geq 2$ that we can confirm is the rank of infinitely many elliptic curves over $\QQ$.

\begin{thm}\label{T:MAIN}
There are infinitely many elliptic curves over $\QQ$, up to isomorphism over $\Qbar$, of rank $2$.
\end{thm}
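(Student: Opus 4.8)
The plan is to exhibit an explicit one-parameter family of elliptic curves $E_t/\QQ$, indexed by $t$ in some set, together with a proof that $E_t(\QQ)$ has rank exactly $2$ for all $t$ in an infinite subset, and that these curves fall into infinitely many $\Qbar$-isomorphism classes. The family should be chosen so that $E_t$ has two "obvious" independent rational points (coming from a rank-$2$ subgroup of $\calE(\QQ(T))$ for a suitable elliptic surface $\calE$), which gives the lower bound $\rank E_t(\QQ) \geq 2$ for all but finitely many $t$. The real content is the matching \emph{upper} bound, and this is where a $2$-descent enters: one writes down the relevant $2$-Selmer group (or a $2$-isogeny Selmer group, if $E_t$ has a rational $2$-torsion point, which would let the local conditions be analyzed via quadratic residue symbols) and shows that for $t$ in an appropriate congruence- or factorization-type family the Selmer group has exactly the size forced by the two known points together with $E_t[2](\QQ)$ and $\Sha$-triviality, i.e.\ $\dim_{\FF_2}\Sel_2(E_t) = 2 + \dim_{\FF_2} E_t[2](\QQ)$. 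Combined with the lower bound, this pins the rank at $2$.

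Concretely, I would first fix the surface and the two sections, compute a Weierstrass model $y^2 = f_t(x)$ with $f_t$ factoring (partially) over $\QQ$ so that the descent map sends points to products of differences of roots; this reduces membership in the Selmer group to the solvability of certain conics/quartics, which in turn becomes a statement about Hilbert symbols $(\cdot,\cdot)_p$ at the primes dividing a few explicit polynomials $d_1(t), d_2(t), \dots$ in $t$. The second step is to isolate the arithmetic condition on $t$ — something like: each $d_i(t)$ is (up to sign and a fixed finite set of bad primes) a product of primes lying in prescribed residue classes, or more robustly, that a certain homogeneous form in $t$ represents only "square-compatible" values — that forces all the extra Selmer classes to be locally obstructed and hence the Selmer group to collapse to dimension $2$.

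The third and decisive step is to prove that this arithmetic condition is satisfied by infinitely many $t$, and here is where the theorem of Tao and Ziegler is invoked: the condition will amount to asking that several linear (or linear-after-substitution) forms in the parameter(s) be simultaneously prime, or be simultaneously of a controlled shape, along an arithmetic progression — i.e.\ a Dickson-type / linear-equations-in-primes input — and the Green–Tao–Ziegler machinery (or the Tao–Ziegler result on polynomial progressions / the primes containing arbitrarily long configurations) guarantees infinitely many such $t$. One must check the local admissibility (no fixed prime divides every value of the relevant form simultaneously) so that the hypotheses of the Tao–Ziegler theorem actually apply; this bookkeeping, together with verifying that the finitely many bad primes of $E_t$ do not spoil the descent uniformly in $t$, is the step I expect to be the main obstacle. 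Finally, infinitude of $\Qbar$-isomorphism classes follows by showing the $j$-invariant $j(E_t)\in\QQ(t)$ is nonconstant, so that $j(E_t)$ takes infinitely many values on the infinite set of admissible $t$; a nonconstant $j$ also rules out the trivial objection that we are merely reproducing one curve via quadratic twists.
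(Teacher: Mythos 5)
Your outline follows the same strategy as the paper: specialize a nonisotrivial elliptic surface of rank $2$ over $\QQ(T)$ to get the lower bound, force the matching upper bound by a descent through a rational $2$-isogeny whose local conditions become quadratic-residue computations at an explicitly controlled set of bad primes, invoke Tao--Ziegler for infinitely many admissible parameters, and separate $\Qbar$-isomorphism classes by the $j$-invariant. But as written it is a plan, not a proof: every step carrying the actual content is deferred. The paper itself stresses that the serious work is producing a family for which all of these constraints can be met \emph{simultaneously} --- singular fibers only at integers and at $\infty$, fiber of type $\operatorname{I}_0^*$ at $\infty$, root number $+1$ under the imposed congruences, and Selmer groups that provably collapse to the contribution of the two known points (so that $\Sha(E/\QQ)[2]=0$) --- and your proposal supplies neither such a family nor the descent computation; it only asserts that some choice should work and names the verification as ``the main obstacle.''

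There is also a concrete point where the sketch, taken literally, would fail. For a one-parameter family $E_t$ with $t$ ranging over integers, the arithmetic input you describe --- several nonproportional polynomials in $t$ simultaneously prime (in prescribed residue classes) --- is not delivered by the Green--Tao--Ziegler linear-equations machinery, which requires linear forms in at least two variables, pairwise affinely independent; simultaneous primality of several polynomials in a single variable is Schinzel/Dickson territory and open. The paper's resolution is precisely the two-parameter specialization $t=m/n^2$: this is why the fiber at $\infty$ must be $\operatorname{I}_0^*$ (so the square denominator creates no new bad primes), and it turns the admissibility condition into ``$m$, $m+16n^2$, $m+25n^2$ all prime and $\equiv 11 \pmod{24}$,'' which is exactly the polynomial-progression configuration covered by Tao--Ziegler (their Theorem~1.3, whose positive-relative-density hypothesis absorbs the congruence condition). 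Your parenthetical ``linear-after-substitution'' hedge does not address this, and without the two-parameter design the infinitude step has no unconditional input. So the approach is the right one, but the decisive choices that make it work --- the explicit model, the congruences that kill the extra Selmer classes, and the $m/n^2$ substitution that makes Tao--Ziegler applicable --- are exactly the missing content.
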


We will prove Theorem~\ref{T:MAIN} by showing that a specific class of elliptic curves over $\QQ$ has rank $2$.

\begin{thm}\label{T:main examples}
Let $m$ and $n$ be any natural numbers for which $m$, $m+16n^2$ and $m+25n^2$ are primes congruent to $11$ modulo $24$.  Let $E$ be the elliptic curve over $\QQ$ defined by the equation 
\[
y^2 = x^3-5(m+16n^2)x^2+4(m+16n^2)(m+25n^2)x.
\]
Then $E(\QQ) \cong \ZZ/2\ZZ \times \ZZ^2$.
\end{thm}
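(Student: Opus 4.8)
The plan is to split the statement $E(\QQ)\cong\ZZ/2\ZZ\times\ZZ^2$ into the rank assertion $\rank E(\QQ)=2$ and the torsion assertion $E(\QQ)_{\tors}\cong\ZZ/2\ZZ$, and to obtain the first by a descent via a $2$-isogeny. Throughout put $A=m+16n^2$ and $B=m+25n^2$, so that $E\colon y^2=x(x^2-5Ax+4AB)$. I would begin by recording the elementary consequences of the hypotheses. The numbers $m,A,B$ are distinct primes, each $\equiv 11\pmod{24}$, hence each is $\equiv 3\pmod 8$ and $\equiv 2\pmod 3$, so $\legendre{-1}{p}=\legendre{2}{p}=-1$ and $\legendre{3}{p}=1$ for $p\in\{m,A,B\}$. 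Primality forces $m\nmid n$, and since $A\equiv m+n^2\pmod 3$ must be $\equiv 2$ it also forces $3\mid n$; one checks that $m,A,B$ are pairwise coprime and coprime to $30$. From $A\equiv(4n)^2\pmod m$, $B\equiv(5n)^2\pmod m$ and $B\equiv(3n)^2\pmod A$ one gets $\legendre{A}{m}=\legendre{B}{m}=\legendre{B}{A}=1$, and quadratic reciprocity then gives $\legendre{m}{A}=\legendre{m}{B}=\legendre{A}{B}=-1$. Also, $x^2-5Ax+4AB$ has discriminant $9Am$, which is not a square, so it is irreducible and $E(\QQ)[2]=\{O,(0,0)\}\cong\ZZ/2\ZZ$.

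Put $E'\colon y^2=x(x^2+10Ax+9Am)$; since $(-5A)^2-16AB=9Am$ there is a $2$-isogeny $\phi\colon E\to E'$ with kernel $\langle(0,0)\rangle$, dual isogeny $\hat\phi\colon E'\to E$, and $\hat\phi\phi=[2]$. The classical $x$-coordinate descent maps give injections $\alpha\colon E(\QQ)/\hat\phi E'(\QQ)\hookrightarrow\QQ^*/(\QQ^*)^2$ and $\alpha'\colon E'(\QQ)/\phi E(\QQ)\hookrightarrow\QQ^*/(\QQ^*)^2$, sending $(0,0)$ to the class of $4AB$, respectively $9Am$, with images in $\langle-1,2,A,B\rangle$, respectively $\langle-1,3,A,m\rangle$. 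Because $E(\QQ)[2]=E'(\QQ)[2]=\ZZ/2\ZZ$, the standard exact sequence
\[
0\to E'(\QQ)[\hat\phi]\to E'(\QQ)/\phi E(\QQ)\to E(\QQ)/2E(\QQ)\to E(\QQ)/\hat\phi E'(\QQ)\to 0
\]
gives $\#\bigl(E(\QQ)/\hat\phi E'(\QQ)\bigr)\cdot\#\bigl(E'(\QQ)/\phi E(\QQ)\bigr)=2^{\,r+2}$, where $r=\rank E(\QQ)$; so it suffices to prove that each of the two quotients has order exactly $4$. For the lower bound I would write down the rational points $P_1=(A,6An)$, $P_2=(4B,24Bn)$ on $E$ and $Q_1=(-9m,108mn)$, $Q_2=(-A,12An)$ on $E'$, whose verification uses only $B-A=9n^2$ and $A-m=-16n^2$. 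Their images show that $\operatorname{im}\alpha$ contains the classes of $A$, $B$, $AB$ and $\operatorname{im}\alpha'$ contains the classes of $-m$, $-A$, $Am$, so $\operatorname{im}\alpha\supseteq\langle A,B\rangle$ and $\operatorname{im}\alpha'\supseteq\langle-m,-A\rangle$; since $A,B$ and $m,A$ are pairs of distinct odd primes these subgroups have order $4$, so both quotients have order $\ge 4$ and $r\ge 2$.

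For the upper bound I would prove that $S^{(\hat\phi)}(E/\QQ)=\langle A,B\rangle$ and $S^{(\phi)}(E/\QQ)=\langle-m,-A\rangle$, each of order $4$; combined with the previous paragraph this forces both quotients to have order exactly $4$, whence $r=2$. Each of these Selmer groups lies in $\langle-1,2,3,A,B,m\rangle$, and a class $d$ belongs to it exactly when the homogeneous space $d\,w^2=d^2u^4+a\,d\,u^2v^2+b\,v^4$ — with $(a,b)=(-5A,4AB)$, respectively $(10A,9Am)$ — has a point over every $\QQ_v$. The verification rests on four ingredients: (i) for the $\hat\phi$-descent the real place kills every $d<0$, since the relevant quartic is then negative on $\RR^2\setminus\{0\}$ (for the $\phi$-descent the real place imposes no condition); (ii) a $v_p$-valuation count at the bad primes not dividing $b$ — that is, $3$ and $m$ for the $\hat\phi$-descent and $2$ and $B$ for the $\phi$-descent — forces $v_p(d)$ to be even there, cutting $S^{(\hat\phi)}(E/\QQ)$ down to a subgroup of $\langle-1,2,A,B\rangle$ and $S^{(\phi)}(E/\QQ)$ down to a subgroup of $\langle-1,3,A,m\rangle$; (iii) the class $2$ is excluded by working at the prime $A$, where the homogeneous space becomes $w^2\equiv 2u^4\pmod A$ on the unit locus (impossible because $\legendre{2}{A}=-1$) and has odd $A$-adic valuation otherwise; (iv) for the $\phi$-descent the classes $3$, $A$, $3A$ — representatives for the three nonzero cosets of $\langle-m,-A\rangle$ — are excluded: $d=3$ at the prime $3$ using $A\equiv 2\pmod 3$, and $d\in\{A,3A\}$ at the prime $A$ using $\legendre{m}{A}=-1$ and $\legendre{3}{A}=1$ (so that $9m$ and $3m$ are nonsquares mod $A$). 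Since each Selmer group already contains $\langle A,B\rangle$, respectively $\langle-m,-A\rangle$, by the rational points above, removing the excluded classes pins it to exactly the claimed order-$4$ group. The role of the congruence condition $m\equiv m+16n^2\equiv m+25n^2\equiv 11\pmod{24}$ is precisely to make all the Legendre symbols in (i)--(iv) come out as needed, and this is the step I expect to be the main obstacle.

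It remains to compute the torsion. First, $(0,0)\notin 2E(\QQ)$, since its $\alpha$-image is the nonsquare class $4AB$, so $E(\QQ)$ has no point of order $4$. The given model has discriminant $2^{8}3^{2}A^{3}B^{2}m$, and since $3\mid n$ one checks that $E$ has non-split multiplicative reduction at $3$, with $\widetilde{E}_{\mathrm{ns}}(\FF_3)\cong\ZZ/4\ZZ$ and component group of order $2$. As $3$ is odd, the kernel of reduction at $3$ is torsion-free, so $E(\QQ)_{\tors}$ injects into the group of smooth $\FF_3$-points of the Néron special fibre, which has order $8$; being a $2$-group whose $2$-torsion is $\ZZ/2\ZZ$ and which has no point of order $4$, it must equal $\ZZ/2\ZZ$. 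Together with $r=2$ this gives $E(\QQ)\cong\ZZ/2\ZZ\times\ZZ^2$.
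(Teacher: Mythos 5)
Your proposal is correct, and it follows the same overall strategy as the paper: a descent through the same $2$-isogeny $\phi\colon E\to E'$ (with $b'=9Am$), upper bounds on both Selmer groups by local solubility arguments, explicit rational points filling them up, and the count $\#\bigl(E(\QQ)/\hat\phi E'(\QQ)\bigr)\cdot\#\bigl(E'(\QQ)/\phi E(\QQ)\bigr)=2^{r+2}$. The differences are in the execution, and they are genuine. For the Selmer bounds the paper works at $2$ and at $B=m+25n^2$ for the $\phi$-Selmer group (proving $d\equiv 1\pmod 4$ and $\legendre{d}{B}=1$) and at $\infty$ and $m$ for the $\hat\phi$-Selmer group, whereas you work at $3$ and $A=m+16n^2$, respectively at $\infty$ and $A$; both sets of local conditions do the job (I checked your exclusions of $d=2$ at $A$, and of $d=3,A,3A$ at $3$ and $A$, using $\legendre{2}{A}=\legendre{m}{A}=-1$ and $\legendre{3}{A}=1$). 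You also exploit the known lower bound $\langle A,B\rangle$, resp.\ $\langle -m,-A\rangle$, inside the Selmer group to reduce the upper-bound computation to excluding one coset representative per nontrivial coset; the paper instead bounds each Selmer group by an explicit $4$-element set without using the image, which yields the extra information $\Sha(E/\QQ)[2]=0$ that your sandwich argument does not directly give. Finally, your torsion argument is different and arguably cleaner: non-split multiplicative reduction at $3$ (type $\operatorname{I}_2$, $c_3=2$, $\widetilde E_{\mathrm{ns}}(\FF_3)\cong\ZZ/4\ZZ$) bounds $E(\QQ)_{\tors}$ by a group of order $8$, while the paper runs Tate's algorithm at $2$ (Kodaira type $\operatorname{I}_r^*$); both then rule out $4$-torsion by noting $(0,0)\notin\hat\phi(E'(\QQ))\supseteq 2E(\QQ)$. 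One small imprecision: in your item (iii), for points with negative $A$-adic valuation of the variable the relevant valuation is even ($-4e$), and the contradiction comes again from $\legendre{2}{A}=-1$ after clearing denominators, not from odd valuation; the computation still goes through exactly as in the paper's analogous steps.
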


We will make use of the \emph{Polynomial Szemer\'edi theorem for primes} due to Tao and Ziegler \cite{TZ} to guarantee that there are infinitely many pairs $(m,n)$ of natural numbers for which $m$, $m+16n^2$ and $m+25n^2$ are primes congruent to $11$ modulo $24$.  This is the source of the infiniteness in our proof of Theorem~\ref{T:MAIN}.

With $E/\QQ$ as in Theorem~\ref{T:main examples}, we have the following points in $E(\QQ)$:
\[
P_0=(0,0),\quad P_1=(m+16n^2, 6n(m+16n^2)), \quad P_2=(36n^2, 12n(m-2n^2)).
\]
The point $P_0$ has order $2$ and our proof will show that $P_1$ and $P_2$ generate a free abelian group of rank $2$.  Moreover, we will see that the points $P_0$, $P_1$ and $P_2$ generate the group $E(\QQ)/2E(\QQ)\cong (\ZZ/2\ZZ)^3$.

 The bad primes of $E$ are precisely $2$, $3$, $m$, $m+16n^2$ and $m+25n^2$.   To determine $E(\QQ)/2E(\QQ)$ we will perform a descent using an isogeny of degree $2$.   Our complete knowledge of the bad primes will allow us to directly compute the relevant Selmer groups.  For the elliptic curves $E/\QQ$ we are considering, we are fortunate to always have $\Sha(E/\QQ)[2]=0$.

\subsection{Aside: remarks concerning our curves}
The proof of Theorem~\ref{T:main examples} is a straightforward and pleasant descent computation.   The serious work in the theorem was finding these elliptic curves in the first place! Let us briefly make a few observations of the important properties that these curves have.  This will hopefully serve as motivation.    This material will not be used elsewhere in the paper.

Let $\calE$ be the elliptic curve over the function field $\QQ(T)$ defined by the Weierstrass equation
\[
y^2=x^3-5(T+16)x^2 + 4(T+16)(T+25)x;
\]
it has discriminant $2^83^2T(T+16)^3(T+25)^2$.    The two points $(T+16,6(T+16))$ and $(36,12(T-2))$ in $E(\QQ(T))$ have infinite order and are independent.  Moreover, $\calE(\QQ(T))$ has rank $2$.  The singular fibers of $\calE$ are at $0$, $-16$, $-25$ and $\infty$ with Kodaira symbols $\operatorname{I}_1$,  $\operatorname{III}$, $\operatorname{I}_2$ and $\operatorname{I}_0^*$, respectively.

Since $\calE$ is nonisotrivial, it produces infinitely many elliptic curves over $\QQ$ of rank at least $2$.   For every  $t\in \QQ-\{0,-16,-25\}$, let $\calE_t$ be the elliptic curve over $\QQ$ obtained by replacing $T$ by $t$.  From a theorem of Silverman \cite{MR703488}, $\calE_t$ has rank at least $2$ for all but finitely many $t$.   The challenge for us is that the ranks could be larger.    

Now consider  $t=m/n^2$ with $m$ and $n$ fixed natural numbers that are relative prime.    The curve $\calE_t$ is isomorphic to the elliptic curve $E/\QQ$ defined by the equation in Theorem~\ref{T:main examples}; the isomorphism is simply scaling the variables by suitable powers of $n$.  The discriminant for the Weierstrass model of $E$ is 
\[
2^8 \cdot  3^2\cdot m \cdot (m+16n^2)^3\cdot (m+25n^2)^2
\]
which gives constraints on the possible bad primes of $E$.    We chose $t$ to have square denominator since otherwise the curve $\calE_t$ might also have bad reduction at primes dividing the denominator of $t$  (this was arranged by having the singular fiber of $\calE$ at $\infty$ to have Kodaira symbol $\operatorname{I}_0^*$).

We now assume that $m$ and $n$ are chosen so that $m$, $m+16n^2$ and $m+25n^2$ are all primes with $m>5$.   Using this, we find that $\calP:=\{2,3,m,m+16n^2,m+25n^2\}$ is precisely the set of bad primes for $E$.  Note that in order to apply the theorem of Tao and Ziegler and obtain infinitely many such pairs $(m,n)$, it was important for the singular fibers of $\calE$ to occur only at integers and $\infty$.

Let $W(E) \in \{\pm 1\}$ be the global root number of $E/\QQ$.   The \emph{parity conjecture} predicts that the rank of $E$ is even if and only if $W(E)=1$.   Since we are trying to find elliptic curves of rank $2$, we should thus restrict to pairs $(m,n)$ for which we know that $W(E)=1$.  We have $W(E)=-\prod_{p\in \calP} W_p$, where $W_p$ is the local root number of $E$ at $p$.   After some local root number computations, we find that
\[
W(E)= W_2\cdot  W_3 \cdot (-1)^{\tfrac{m+1}{2}}.
\]
The assumption $m\equiv 11 \pmod{24}$ in Theorem~\ref{T:main examples} implies that $W_2=W_3=1$ and hence $W(E)=1$.   If instead we were to take $m\equiv 5 \pmod{24}$, then we would always have $W(E)=-1$.    In the case $m\equiv 5 \pmod{24}$, a similar proof to that of this paper will show that there are infinitely many elliptic curves over $\QQ$ with root number $-1$ and rank $2$ or $3$.  This can be used to show that, assuming the parity conjecture, there are infinitely many elliptic curves over $\QQ$ of rank $3$.   Similarly, there were earlier results showing that, under the parity conjecture, there are infinitely many elliptic curves over $\QQ$ of rank $2$, cf.~\cite{MR3492814, MR3960281}.

Our elliptic curve $E$ has a rational $2$-torsion point and we can thus perform a descent by using an isogeny $\phi\colon E\to E'$ of degree $2$.   In our proof of Theorem~\ref{T:main examples}, we will compute the Selmer groups $\Sel_\phi(E/\QQ)$ and $\Sel_{\hat\phi}(E'/\QQ)$, where $\hat\phi\colon E'\to E$ is the dual isogeny of $\phi$.    The natural homomorphisms
\begin{align} \label{E:Sel intro}
E'(\QQ)/\phi(E(\QQ)) \hookrightarrow \Sel_\phi(E/\QQ) \quad \text{ and }\quad E(\QQ)/\hat\phi(E'(\QQ)) \hookrightarrow \Sel_{\hat\phi}(E'/\QQ)
\end{align}
will be shown to be isomorphisms and from this we will compute $E(\QQ)/2E(\QQ)$.

The congruence conditions on $m$, $m+16n^2$ and $m+25n^2$ in Theorem~\ref{T:main examples} will be used during the Selmer group computations.    Without these imposed congruences one can find examples where $E/\QQ$ has root number $1$ and the homomorphisms (\ref{E:Sel intro}) are not both isomorphisms; in such cases we will have $\Sha(E/\QQ)[2]\neq 0$.

This article is a sequel to \cite{Zyw25} where we find an example of a nonisotrivial elliptic curve $\calE$ over $\QQ(T)$ for which $\calE_t$ is an elliptic curve over $\QQ$ of rank $0$ for infinitely many $t\in \QQ$.   The computations here are much simpler that in \cite{Zyw25} in part because the extra rational points give rise to elements in our Selmer groups.

\section{Descent via two-isogeny} \label{S:background}

In this section, we recall basic definitions and results concerning descent via a two-isogeny.  See \cite[\S X.4]{Silverman}, and especially  \cite[\S X.4 Example 4.8]{Silverman}, for the relevant formulae.

We start with an elliptic curve $E/\QQ$ defined by a Weierstrass equation $y^2=x(x^2+ax+b)$, where $a$ and $b$ are integers.  With $a':=-2a$ and $b':=a^2-4b$, we let $E'$ be the elliptic curve over $\QQ$ given by the model $y^2=x(x^2+a'x+b')$.   There is an isogeny $\phi\colon E\to E'$ given by 
\[
\phi(x,y)=\Big(\frac{y^2}{x^2}, \frac{y(b-x^2)}{x^2} \Big).
\]
whose kernel $E[\phi]$ is cyclic of order $2$ and generated by $(0,0)$.   Let $\hat\phi\colon E'\to E$ be the dual isogeny of $\phi$; its kernel $E'[\hat\phi]$ is generated by the $2$-torsion point $(0,0)$ of $E'$.  We have
\[
\hat\phi(x,y)=\Big(\frac{y^2}{4x^2}, \frac{y(b'-x^2)}{8x^2} \Big).
\]

For each $d \in \QQ^\times$, let $C_d$ be the smooth projective curve over $\QQ$ defined by the affine equation
\begin{align} \label{E:Cd background}
y^2=d x^4+a'x^2 + b'/d.
\end{align}

 Set $\Gal_\QQ:=\Gal(\Qbar/\QQ)$.  Starting with the short exact sequence $0\to E[\phi]\to E \xrightarrow{\phi} E'\to 0$ and taking Galois cohomology yields an exact sequence
\[
0 \to E(\QQ)[\phi] \to E(\QQ)\xrightarrow{\phi} E'(\QQ) \xrightarrow{\delta} H^1(\Gal_\QQ, E[\phi]).
\]
Since $E[\phi]$ and $\{\pm 1\}$ are isomorphic $\Gal_\QQ$-modules, we have a natural isomorphism
\begin{align} \label{E:H1 isom}
H^1(\Gal_\QQ, E[\phi])\xrightarrow{\sim} H^1(\Gal_\QQ, \{\pm 1\}) \xrightarrow{\sim} \QQ^\times/(\QQ^\times)^2,
\end{align}
where the last isomorphism is using that each extension of $\QQ$ of degree at most $2$ is obtained by adjoining the square root from a unique square class.  Using the isomorphism (\ref{E:H1 isom}), we may view $\delta$ as a homomorphism 
\[
\delta\colon E'(\QQ)\to  \QQ^\times/(\QQ^\times)^2.
\]   
For any point $(x,y)\in E'(\QQ)-\{0,(0,0)\}$, we have 
\[
\delta((x,y))=x\cdot (\QQ^\times)^2.
\]  
We also have $\delta(0)=1$ and $\delta((0,0))=b'\cdot (\QQ^\times)^2$.

Let $\Sel_\phi(E/\QQ) \subseteq H^1(\Gal_\QQ, E[\phi])$ be the \defi{$\phi$-Selmer group} of $E$.  Using (\ref{E:H1 isom}), we can identify $\Sel_\phi(E/\QQ)$ with a subgroup of $ \QQ^\times/(\QQ^\times)^2$.  In fact, we have
\[
\Sel_\phi(E/\QQ)=\{ d\in \QQ^\times/(\QQ^\times)^2: C_d(\QQ_v)\neq \emptyset \text{ for all places $v$ of $\QQ$}\}
\]
which we will use as our working definition of the $\phi$-Selmer group.   

The importance of $\Sel_\phi(E/\QQ)$ is that it is a finite computable group that contains the image of $\delta$.     In particular, $\delta$ gives rise to an injective homomorphism
\[
E'(\QQ)/\phi(E(\QQ)) \hookrightarrow \Sel_\phi(E/\QQ).
\]

For a prime $p$, we will denote by $\ord_p\colon \QQ_p \to \ZZ \cup \{\infty\}$ to be the $p$-adic discrete valuation normalized so that $\ord_p(p)=1$.

\begin{lemma} \label{L:easy d}
Let $d$ be a squarefree integer representing a square class in $\Sel_\phi(E/\QQ)$.  Then $d$ divides $b'$.
\end{lemma}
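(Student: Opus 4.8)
The plan is to prove the contrapositive one prime at a time. Fix a prime $p$ dividing $d$; since $d$ is squarefree, $\ord_p(d)=1$. Assuming $p\nmid b'$, I will show that the curve $C_d$ of (\ref{E:Cd background}) has no $\QQ_p$-point, which contradicts $d\in \Sel_\phi(E/\QQ)$ under the working definition of the $\phi$-Selmer group as the set of square classes $d$ with $C_d(\QQ_v)\neq\emptyset$ for all places $v$. Granting this, $p\mid b'$ for every prime $p\mid d$, and as $d$ is squarefree this says exactly that $d\mid b'$.

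The first step is to observe that $C_d(\QQ_p)$ contains no point at infinity. On the smooth projective model of $C_d$, the two geometric points lying above $x=\infty$ are rational over $\QQ_p$ only when the leading coefficient $d$ of the quartic $dx^4+a'x^2+b'/d$ is a square in $\QQ_p$; but $\ord_p(d)=1$ is odd, so $d\notin(\QQ_p^\times)^2$. Hence any point of $C_d(\QQ_p)$, were one to exist, would be an affine point $(x_0,y_0)$ with $x_0,y_0\in\QQ_p$ and $y_0^2=dx_0^4+a'x_0^2+b'/d$.

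The second step is a short valuation computation. Write $v:=\ord_p$ and recall that $a'$ and $b'$ are integers, so $v(a')\geq 0$, while $v(b')=0$ by assumption and $v(d)=1$. If $v(x_0)\geq 0$ (including the case $x_0=0$), then $v(b'/d)=-1$ while $v(dx_0^4)\geq 1$ and $v(a'x_0^2)\geq 0$, so the right-hand side has valuation exactly $-1$. If instead $v(x_0)\leq -1$, then $v(dx_0^4)=1+4v(x_0)\leq -3$, which is strictly smaller than $2v(x_0)\leq v(a'x_0^2)$ (since $v(x_0)\le -1$ forces $-2v(x_0)-1\ge 1>0$) and strictly smaller than $v(b'/d)=-1$, so the right-hand side has valuation exactly $1+4v(x_0)$. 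In either case $v(y_0^2)$ is odd, which is impossible. Thus there are no affine points either, so $C_d(\QQ_p)=\emptyset$, completing the argument.

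The only step that requires genuine care is the treatment of the points at infinity: one must make sure that passing from the naive affine quartic to its smooth projective model does not introduce a $\QQ_p$-point that escapes the valuation estimate. This is precisely where the oddness of $\ord_p(d)$ gets used a second time. Everything else is the elementary parity-of-valuation computation above, so I do not anticipate any real obstacle.
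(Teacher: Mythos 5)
Your proof is correct and follows essentially the same route as the paper: for each prime $p\mid d$ with $p\nmid b'$, the points at infinity of $C_d$ are ruled out because $d$ has odd $p$-adic valuation and hence is not a square in $\QQ_p$, and affine $\QQ_p$-points are ruled out by the same parity-of-valuation argument, split into the cases $\ord_p(x)\geq 0$ and $\ord_p(x)\leq -1$. No gaps; the ultrametric "unique minimum" justification you give is exactly what underlies the paper's valuation computations.
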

\begin{proof}
Suppose that there is a prime $p|d$ that does not divide $b'$.  By our assumptions on $d$, we will have $C_d(\QQ_p)\neq \emptyset$.   The integer $d$ is not a square in $\QQ_p$, since it is only divisible by $p$ once, and hence (\ref{E:Cd background}) has no $\QQ_p$-points at infinity.   Fix a point $(x,y)\in \QQ_p^2$ satisfying (\ref{E:Cd background}).   

Suppose that $x\in \ZZ_p$.   We have $dx^4+a'x^2\in \ZZ_p$ and $\ord_p(b'/d)=-1$, where we have used that $a'$ and $b'$ are integers and that $p\nmid b'$.  Therefore, $\ord_p(y^2)=\ord_p(b'/d)=-1$ which contradicts that $\ord_p(y^2)=2\ord_p(y)$ is an even integer.  We thus have $x\notin \ZZ_p$.

Define $e:=-\ord_p(x)\geq 1$. Comparing $p$-adic valuations, we find that $\ord_p(y^2)=\ord_p(dx^4)=-4e+1$ which again contradicts that $\ord_p(y^2)$ is even.
\end{proof}

Similarly, we have a homomorphism $\delta'\colon E(\QQ)\to \QQ^\times/(\QQ^\times)^2$ that has kernel $\hat\phi(E'(\QQ))$ and the image of $\delta'$ lies in the $\hat\phi$-Selmer group $\Sel_{\hat\phi}(E'/\QQ) \subseteq \QQ^\times/(\QQ^\times)^2$.

There is an exact sequence
\[
0 \to E'(\QQ)[\hat\phi]/\phi(E(\QQ)[2])\to E'(\QQ)/\phi(E(\QQ)) \xrightarrow{\hat\phi} E(\QQ)/2E(\QQ)\to E(\QQ)/\hat\phi(E'(\QQ)) \to 0.
\]
Once we know the group $E(\QQ)/2E(\QQ)$, it will be straightforward to compute the rank of $E(\QQ)$.   

\section{Proof of Theorem~\ref{T:main examples}}

Let $E/\QQ$ be an elliptic curve from Theorem~\ref{T:main examples}; it is given by an equation
\[
y^2 = x\big(x^2-5(m+16n^2)x+4(m+16n^2)(m+25n^2)\big),
\]
where $m$ and $n$ are natural numbers for which $m$, $m+16n^2$ and $m+25n^2$ are primes congruent to $11$ modulo $24$.    The discriminant of this Weierstrass model is 
\[
\Delta= 2^8 \cdot  3^2\cdot m \cdot (m+16n^2)^3\cdot (m+25n^2)^2.
\]
The above Weierstrass model of $E/\QQ$ is minimal since the exponents of the primes dividing $\Delta$ are all strictly less than $12$.   Define the points $P_0=(0,0)$ and $P_1=(m+16n^2, 6n(m+16n^2))$ of $E(\QQ)$.

We follow the notation of \S\ref{S:background}; we have $a=-5(m+16n^2)$, $b=4(m+16n^2)(m+25n^2)$, $a'=-2a=10(m+16n^2)$ and $b'=a^2-4b=9m(m+16n^2)$.  Define the elliptic curve $E'$ over $\QQ$ by 
\[
y^2=x(x^2+a'x+b')=x\big(x^2+10(m+16n^2)x+9m(m+16n^2)\big).
\]
Define the points $Q_0=(0,0)$ and $Q_1=(-(m+16n^2),12n(m+16n^2))$ of $E'(\QQ)$.

As in \S\ref{S:background}, we have an explicit isogeny $\phi\colon E\to E'$ of degree $2$ with its dual isogeny $\hat\phi$.   Along with $P_0$ and $P_1$, we also define a third point of $E(\QQ)$:
\[
P_2:=\hat\phi(Q_1)=(36n^2, 12n(m-2n^2)).
\]

\subsection{Selmer group computations}
We will now show that the Selmer groups $\Sel_\phi(E/\QQ)$ and $\Sel_{\hat\phi}(E/\QQ)$ both have cardinality at most $4$.   To ease notation, we will denote an element of $\QQ^\times/(\QQ^\times)^2$ by the unique squarefree integer it contains. 

\begin{lemma} \label{L:Selmer 1}
We have $\Sel_\phi(E/\QQ) \subseteq \{1,-m, -(m+16n^2), m(m+16n^2)\}$.
\end{lemma}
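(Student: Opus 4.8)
The plan is to combine Lemma~\ref{L:easy d} with a local analysis at the three odd bad primes $p \in \{3, m, m+16n^2, m+25n^2\}$, using the congruence conditions to rule out most square classes. By Lemma~\ref{L:easy d}, every squarefree integer $d$ representing a class in $\Sel_\phi(E/\QQ)$ divides $b' = 9m(m+16n^2)$, so $d$ is a product of a sign, possibly $m$, and possibly $m+16n^2$ (note $3 \nmid b'$ once we check $m \not\equiv 0 \pmod 3$, which follows from $m \equiv 11 \pmod{24}$). So a priori $d$ lies in the set of $8$ classes generated by $-1$, $m$, and $m+16n^2$. I must eliminate four of these eight classes; the natural candidates to kill are exactly $-1$, $m+16n^2$, $-m(m+16n^2)$, and $m$ — wait, more precisely the four classes NOT in the claimed set $\{1, -m, -(m+16n^2), m(m+16n^2)\}$, namely $-1$, $m$, $m+16n^2$, and $-m(m+16n^2)$.

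First I would handle the place $v = \infty$ (or a convenient finite place) to pin down the sign: the curve $C_d$ has equation $y^2 = dx^4 + a'x^2 + b'/d$ with $a' = 10(m+16n^2) > 0$ and $b' > 0$. If $d < 0$ then $dx^4 + a'x^2 + b'/d$ is a downward-opening-in-$x^2$ expression, but at $x = 0$ it equals $b'/d < 0$, and as $x \to \infty$ it goes to $-\infty$; one checks whether it can be positive in between by examining the discriminant $(a')^2 - 4 \cdot d \cdot (b'/d) = (a')^2 - 4b' = b'$ — hmm, this needs care since $(a')^2 - 4b'$ governs real solvability. Actually the cleaner route: reduce mod a well-chosen prime. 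For the classes $m$ and $m+16n^2$ (without the minus sign) I would work $2$-adically or $3$-adically — since $m \equiv m+16n^2 \equiv 11 \pmod{24}$, both are $\equiv 3 \pmod 4$ and $\equiv 2 \pmod 3$, so $-m$ and $-(m+16n^2)$ are $\equiv 1 \pmod 4$. The congruences $m \equiv 11 \pmod{24}$ are precisely calibrated so that the Legendre-symbol computations for local solvability of $C_d$ at $p = 2$ and $p = 3$ come out the right way; I expect that $d = -1$, $d = m$, $d = m+16n^2$ each fail the local condition at $2$ or at $3$, and then $d = -m(m+16n^2)$ fails as the product.

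Concretely, for each of the four bad classes $d$ I would show $C_d(\QQ_v) = \emptyset$ for a specific $v \in \{2, 3\}$ by a valuation-plus-quadratic-residue argument of exactly the type in the proof of Lemma~\ref{L:easy d}: reduce (\ref{E:Cd background}) modulo the relevant prime, track $\ord_v$ of each term using $\ord_2(b') = 0$, $\ord_3(b') = 2$, $\ord_3(a') = 0$, and derive a contradiction with $\ord_v(y^2)$ being even or with $y^2$ being a square in the residue field. The main obstacle will be organizing the $p=3$ case cleanly: since $3^2 \| b'$, the curve $C_d$ at $v = 3$ can have points with $x \notin \ZZ_3$, so one has to split into cases on $\ord_3(x)$ (as in the last paragraph of Lemma~\ref{L:easy d}'s proof) and check each, and one must use $m \equiv 2 \pmod 3$ together with $n$'s residue — though in fact $m + 16n^2 \equiv m + n^2 \pmod 3$ and $m + 25n^2 \equiv m + n^2 \pmod 3$ being prime $> 3$ forces $n \not\equiv 0 \pmod 3$, so $n^2 \equiv 1$ and $m + 16n^2 \equiv 2+1 \equiv 0$? — no: $m \equiv 2$, so $m + n^2 \equiv 2 + 1 = 0 \pmod 3$, contradicting primality unless this prime is $3$ itself, which is excluded by size. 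So actually $n \equiv 0 \pmod 3$, giving $16n^2 \equiv 25 n^2 \equiv 0 \pmod 9$ even, which simplifies the $3$-adic analysis considerably. Sorting out these divisibility interactions among $m$, $n$, $2$, $3$ correctly is the delicate bookkeeping; once the residues are fixed, each local non-solvability check is a short computation.
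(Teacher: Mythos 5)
There is a genuine gap at the very first step: your parenthetical claim that $3\nmid b'$ is false. Here $b'=a^2-4b=9m(m+16n^2)$, so $3\mid b'$ always, and Lemma~\ref{L:easy d} only confines a squarefree representative $d$ to the $16$-element group generated by $-1$, $3$, $m$, $m+16n^2$, not to the $8$-element group generated by $-1$, $m$, $m+16n^2$. Consequently your plan never addresses the classes divisible by $3$; note that $-3$, $3m$, $3(m+16n^2)$ and $-3m(m+16n^2)$ are all $\equiv 1\pmod 4$, so they cannot be removed by any $2$-adic congruence argument and require a separate local obstruction. The paper kills them (together with nothing else) by showing that $d$ must be a square modulo the prime $p=m+25n^2$ and computing $\legendre{-1}{p}=-1$, $\legendre{3}{p}=1$, $\legendre{m}{p}=\legendre{m+16n^2}{p}=-1$; alternatively one can check that $3\mid d$ forces $C_d(\QQ_3)=\emptyset$, using $10(m+16n^2)x^2\equiv 2\pmod 3$ for $x\in\ZZ_3^\times$ and odd $3$-adic valuations otherwise (here $\ord_3(b'/d)=1$ when $3\,\|\,d$). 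Either way, some second local condition beyond $v=2$ is indispensable, and your write-up contains neither.

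The remaining part of your plan is directionally right but not carried out: the four classes you do list ($-1$, $m$, $m+16n^2$, $-m(m+16n^2)$) are exactly those $\equiv 3\pmod 4$ in your restricted set, and they are indeed eliminated at $v=2$ — but this is the technical heart of the paper's proof (one must rule out $x\in 2\ZZ_2$, $x\in\ZZ_2^\times$, and $x\notin\ZZ_2$ separately, working modulo $16$ and using $m+16n^2\equiv 3\pmod 8$), whereas you only state that you ``expect'' these local failures. Your excursion at the real place also contains an arithmetic slip ($(a')^2-4b'=64(m+16n^2)(m+25n^2)$, not $b'$), and in fact $\infty$ gives no constraint here since negative classes such as $-m$ genuinely lie in $\Sel_\phi(E/\QQ)$; that place is only useful for the dual Selmer group in Lemma~\ref{L:Selmer 2}. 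So the proposal as written does not establish the lemma: the candidate set is too small because of the false divisibility claim, and the decisive local computations are asserted rather than proved.
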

\begin{proof}
Take any squarefree integer $d$ representing an element of $\Sel_\phi(E/\QQ)$.   Let $C_d$ be the smooth projective curve over $\QQ$ defined by the affine equation
\begin{align} \label{E:Selmer 1a}
y^2=d x^4+10(m+16n^2)x^2 + 9m(m+16n^2)/d.
\end{align}
Multiplying by $d$ and completing the square gives
\begin{align} \label{E:Selmer 1b}
dy^2=(dx^2+5(m+16n^2) )^2-16(m+16n^2)(m+25n^2).
\end{align}
We have $C_d(\QQ_p) \neq\emptyset$ for all primes $p$ by our choice of $d$.  

Suppose that $d\equiv 3 \pmod{4}$.   The model (\ref{E:Selmer 1a}) has no $\QQ_2$-point at infinity since $d$ is not a square in $\QQ_2$.  Choose a pair $(x,y)\in \QQ_2^2$ satisfying (\ref{E:Selmer 1a}).   If $x\in 2\ZZ_2$, then $y\in \ZZ_2$ and $y^2\equiv 9m(m+16n^2)/d \equiv m^2/3 \equiv 3 \pmod{4}$.  We thus have $x\notin 2\ZZ_2$ since $3$ is not a square modulo $4$.  Now suppose that $x\in \ZZ_2^\times$.   By (\ref{E:Selmer 1b}), we have $dy^2=z^2-16(m+16n^2)(m+25n^2)$ with $z:=dx^2+5(m+16n^2) \in \ZZ_2$ and $y\in \ZZ_2$.  We have $z^2\equiv dy^2\equiv 3 y^2 \pmod{16}$ from which we deduce that $z\equiv 0 \pmod{4}$.  Therefore, $dx^2 \equiv -5(m+16n^2) \equiv 1 \pmod{4}$, where we have used that the prime $m+16n^2$ is $3$ modulo $8$.  So $x^2\equiv 3 \pmod{4}$ which is a contradiction.  Therefore, $x\notin \ZZ_2$.   Define $e:=-\ord_2(x) \geq 1$.   We have $\ord_2(d x^4)=-4e$, $\ord_2(10(m+16n^2)x^2)=-2e+1$ and $\ord_2(9m(m+16n^2)/d)=1$.  From (\ref{E:Selmer 1a}), we deduce that $\ord_2(y^2)=-4e$ and hence $\ord_2(y)=-2e$.   Multiplying (\ref{E:Selmer 1a}) by $2^{4e}$ gives $(2^{2e} y)^2= d(2^e x)^4+2^{2e}10(m+16n^2)(2^ex)^2 + 2^{4e}9m(m+16n^2)/d$.  Using that $2^{2e}y,2^e x \in \ZZ_2^\times$ and reducing modulo $4$, we find that $d$ is a square modulo $4$ which again is a contradiction.    We conclude that $d\equiv 1 \pmod{4}$.

By Lemma~\ref{L:easy d}, $d$ divides $9m(m+16n^2)$.  Therefore, $d$ lies in the subgroup of $\QQ^\times/(\QQ^\times)^2$ generated by $-1$, $3$, $m$ and $m+16n^2$. Since $d\equiv 1 \mod{4}$ and all of the values $-1$, $3$, $m$ and $m+16n^2$ are congruent to $3$ modulo $4$, we have
\begin{align} \label{E:Selmer1 set}
d\in \{1, -3, -m, -(m+16n^2), 3m, 3(m+16n^2), m(m+16n^2),-3m(m+16n^2)\}.
\end{align}

Now suppose that $d$ is not a square modulo the prime $p:=m+25n^2$.    The model (\ref{E:Selmer 1a}) has no $\QQ_p$-point at infinity since $d$ is not a square in $\QQ_p$. Choose a pair $(x,y)\in \QQ_p^2$ satisfying (\ref{E:Selmer 1a}).   First suppose that $x\in \ZZ_p$.  By (\ref{E:Selmer 1b}), we have $dy^2=z^2-16(m+16n^2)p$ with $z\in \ZZ_p$.   If $z\in \ZZ_p^\times$, then $dy^2\equiv z^2 \pmod{p}$ with $z\not\equiv 0 \pmod{p}$ which contradicts that $d$ is not a square modulo $p$.  If $z\in p\ZZ_p$, then $dy^2 \equiv 16(m+16n^2)p \pmod{p^2}$ which implies that the even integer $2\ord_p(y)=\ord_p(y^2)$ is equal to $1$.  So we must have $x\notin \ZZ_p$.    Define $e:=-\ord_p(x) \geq 1$.   We have $\ord_p(d x^4)=-4e$, $\ord_p(10(m+16n^2)x^2)=-2e$ and $\ord_p(9m(m+16n^2)/d)=0$.  From (\ref{E:Selmer 1a}), we deduce that $\ord_p(y^2)=-4e$ and hence $\ord_p(y)=-2e$.   Multiplying (\ref{E:Selmer 1a}) by $p^{4e}$ gives $(p^{2e} y)^2= d(p^e x)^4+p^{2e}10(m+16n^2)(p^ex)^2 + p^{4e}9m(m+16n^2)/d$.  Using that $p^{2e}y,p^e x \in \ZZ_p^\times$ and reducing modulo $p$, we find that $d$ is a square modulo $p$ which again is a contradiction.    We conclude that $d$ is a square modulo $p$.

We now need to work out which of the integers in the set (\ref{E:Selmer1 set}) are squares modulo $p=m+25n^2$.   Since $p\equiv 3 \pmod{4}$ and $p\equiv 2 \pmod{3}$, we have  $\legendre{-1}{p}=-1$ and $\legendre{3}{p}=-\legendre{p}{3}=-\legendre{2}{3}=1$. We also have $\legendre{m+16n^2}{p}= \legendre{-25n^2+16n^2}{p} = \legendre{-9n^2}{p}=\legendre{-1}{p}$, where we have used $m\equiv -25n^2 \pmod{p}$ and $p\nmid n$ (since $n<p$).   Applying these Legendre symbols to the integers in the set (\ref{E:Selmer1 set}) and using that $d$ is a square modulo $p$, we conclude that $d$ lies in $\{1,  -m, -(m+16n^2), m(m+16n^2)\}$.
\end{proof}

\begin{lemma} \label{L:Selmer 2}
We have $\Sel_{\hat\phi}(E'/\QQ) \subseteq \{1, m+16n^2, m+25n^2, (m+16n^2)(m+25n^2)\}$.
\end{lemma}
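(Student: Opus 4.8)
The plan is to mirror the proof of Lemma~\ref{L:Selmer 1}, now working with the curve $E'$ in place of $E$. Recall that for the $\hat\phi$-descent we use the Weierstrass model $y^2=x(x^2+a'x+b')$ for $E'$, so the dual quartics are $C'_d\colon y^2=dx^4+a''x^2+b''/d$ with $a''=-2a'=-20(m+16n^2)$ and $b''=(a')^2-4b'=100(m+16n^2)^2-36m(m+16n^2)=(m+16n^2)\big(100(m+16n^2)-36m\big)=64(m+16n^2)(m+25n^2)$. Completing the square as before, multiplying $y^2=dx^4+a''x^2+b''/d$ by $d$ gives
\[
dy^2=\big(dx^2+\tfrac{a''}{2}\big)^2-\tfrac{(a''/2)^2-b''}{1}= \big(dx^2-10(m+16n^2)\big)^2-\big(100(m+16n^2)^2-64(m+16n^2)(m+25n^2)\big),
\]
and $100(m+16n^2)^2-64(m+16n^2)(m+25n^2)=(m+16n^2)\big(100(m+16n^2)-64(m+25n^2)\big)=(m+16n^2)(36m)=36m(m+16n^2)$. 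So the completed-square form is $dy^2=\big(dx^2-10(m+16n^2)\big)^2-36m(m+16n^2)$, the analogue of (\ref{E:Selmer 1b}).

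First I would apply Lemma~\ref{L:easy d}: any squarefree $d$ in $\Sel_{\hat\phi}(E'/\QQ)$ divides $b''=64(m+16n^2)(m+25n^2)$, hence $d$ lies in the subgroup of $\QQ^\times/(\QQ^\times)^2$ generated by $-1$, $2$, $m+16n^2$ and $m+25n^2$ — but note the factor $64$ is a square, so in fact $d$ is built from $-1$, $m+16n^2$, $m+25n^2$ only (no $2$). Next I would run the $2$-adic argument: assuming $d\equiv 3\pmod 4$ leads to a contradiction by examining a point $(x,y)\in\QQ_2^2$ on $C'_d$ in the three cases $x\in 2\ZZ_2$, $x\in\ZZ_2^\times$, $x\notin\ZZ_2$, using the completed-square form modulo $16$ and the congruences $m\equiv m+16n^2\equiv m+25n^2\equiv 3\pmod 8$ (all primes $\equiv 11\pmod{24}$, and $16n^2,25n^2$ shift a $3\pmod 8$ class appropriately). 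This forces $d\equiv 1\pmod 4$; combined with the divisibility constraint and the fact that $-1,m+16n^2,m+25n^2$ are each $\equiv 3\pmod 4$, we get $d\in\{1,-(m+16n^2),-(m+25n^2),(m+16n^2)(m+25n^2)\}$ — a set of size $4$, but not yet the one claimed.

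Then I would impose a local condition at a well-chosen prime to pin down the sign. The natural choice is $p:=m$: since $d$ must be supported on primes dividing $b''$ and $p=m$ does not divide $b''=64(m+16n^2)(m+25n^2)$ (as $m$, $m+16n^2$, $m+25n^2$ are distinct primes), the quartic-point/valuation argument of Lemma~\ref{L:Selmer 1} shows $d$ is a square modulo $p=m$. Now I compute Legendre symbols modulo $m$: working modulo $m$ we have $m+16n^2\equiv 16n^2$ and $m+25n^2\equiv 25n^2$, so $\legendre{m+16n^2}{m}=\legendre{16n^2}{m}=1$ and $\legendre{m+25n^2}{m}=\legendre{25n^2}{m}=1$ (both are squares times $n^2$, and $m\nmid n$ since $n<m$), while $\legendre{-1}{m}=-1$ because $m\equiv 3\pmod 4$. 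Hence among the four candidates, $-(m+16n^2)$ and $-(m+25n^2)$ are non-squares modulo $m$ and are eliminated, leaving $d\in\{1,(m+16n^2)(m+25n^2)\}$. That is too small; so I expect to also need the condition modulo $4$ was already used, and in fact the claimed set $\{1,m+16n^2,m+25n^2,(m+16n^2)(m+25n^2)\}$ suggests that I should \emph{not} use the full $2$-adic constraint in the same direction — rather, the $2$-adic computation for $E'$ should instead eliminate the classes containing $-1$ directly while \emph{allowing} $m+16n^2$ and $m+25n^2$ individually; I would recheck which residues mod $4$ are forced by the completed-square form $dy^2=z^2-36m(m+16n^2)$ (here $36m(m+16n^2)\equiv m(m+16n^2)\equiv 9\equiv 1\pmod 8$, a subtle difference from the $16(m+16n^2)(m+25n^2)$ appearing in Lemma~\ref{L:Selmer 1}, whose $2$-adic valuation is $4$). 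The main obstacle is exactly this bookkeeping: the $2$-adic analysis is genuinely different for $\hat\phi$ than for $\phi$ because the constant term's $2$-adic valuation changed, and I must carry out the case split on $\ord_2(x)$ carefully to land on the set containing $m+16n^2$ and $m+25n^2$ (rather than their negatives) before the mod-$m$ Legendre computation trims it to the stated four classes. Once the correct mod-$8$ congruences are tracked, the rest is the same routine valuation argument as in Lemma~\ref{L:Selmer 1}.
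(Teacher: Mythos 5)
Your choice of model is not the problem: the quartic $y^2=dx^4-20(m+16n^2)x^2+64(m+16n^2)(m+25n^2)/d$ is isomorphic over $\QQ$ to the one the paper uses (substitute $(x,y)\mapsto(2x,4y)$ and divide by $16$), and your completed square $dy^2=(dx^2-10(m+16n^2))^2-36m(m+16n^2)$ and the mod-$m$ valuation argument are sound. The first genuine gap is your dismissal of the prime $2$: Lemma~\ref{L:easy d} only says that every prime dividing the squarefree $d$ divides $b''=64(m+16n^2)(m+25n^2)$, and $2$ \emph{does} divide $b''$; the fact that $64$ is a square is irrelevant, since squarefree classes such as $d=2$ or $d=2(m+16n^2)$ are perfectly consistent with that lemma. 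Those even classes must be removed by a local condition, and in the paper this is precisely the role of $\legendre{2}{m}=-1$ (valid because $m\equiv 3\pmod 8$), combined with the fact that $d$ is a square mod $m$. You compute $\legendre{-1}{m}$, $\legendre{m+16n^2}{m}$, $\legendre{m+25n^2}{m}$ but never $\legendre{2}{m}$, so in your argument nothing ever eliminates the classes containing $2$. (Also, $m\nmid n$ should be justified by primality of $m+16n^2$, not by ``$n<m$'', which is not guaranteed.)

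The second and more serious gap is that the step meant to eliminate the negative classes is never actually carried out. Your first-pass $2$-adic claim that $d\equiv 1\pmod 4$ is forced cannot be correct: $m+16n^2\equiv 3\pmod 4$ lies in $\Sel_{\hat\phi}(E'/\QQ)$, being $\delta'(P_1)$ for $P_1=(m+16n^2,6n(m+16n^2))$, so the corresponding quartic has $\QQ_2$-points and no such congruence can follow from $2$-adic solvability. You notice the resulting inconsistency (your candidate set comes out wrong) but leave the fix as ``recheck the mod-$8$ bookkeeping,'' which is exactly the part of the proof that is missing, and it is not clear a $2$-adic computation alone can separate $m+16n^2$ from $-(m+16n^2)$ here. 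The paper sidesteps the prime $2$ entirely and instead uses the archimedean place: if $d<0$ then $dx^2\le 0$, so $(dx^2-10(m+16n^2))^2-36m(m+16n^2)\ge 100(m+16n^2)^2-36m(m+16n^2)=64(m+16n^2)(m+25n^2)>0$, contradicting $dy^2\le 0$, and there is no real point at infinity; hence $d>0$. With $d>0$, with $d$ supported on $\{2,\,m+16n^2,\,m+25n^2\}$, and with $d$ a square mod $m$ (using $\legendre{2}{m}=-1$, $\legendre{m+16n^2}{m}=\legendre{m+25n^2}{m}=1$), one lands exactly on the four stated classes. Your proposal needs both of these ingredients supplied before it is a proof.
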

\begin{proof}
Take any squarefree integer $d$ representing an element of $\Sel_\phi(E'/\QQ)$.   Let $C'_d$ be the smooth projective curve over $\QQ$ defined by the affine equation
\begin{align} \label{E:Selmer 2a}
y^2=d x^4 -5(m+16n^2)x^2+4(m+16n^2)(m+25n^2)/d.
\end{align}
Multiplying by $d$ and completing the square gives
\begin{align} \label{E:Selmer 2b}
dy^2=(dx^2-\tfrac{5}{2}(m+16n^2))^2-\tfrac{9}{4}m(m+16n^2).
\end{align}

Suppose that $d<0$.  Since $d$ is not a square in $\RR$, the model (\ref{E:Selmer 2a}) does not have a real point at infinity.  We have $C_d'(\RR)\neq \emptyset$ by our choice of $d$, so there is a pair $(x,y)\in \RR$ satisfying (\ref{E:Selmer 2a}).  Using that $d<0$, we have
  \begin{align*}
  (dx^2-\tfrac{5}{2}(m+16n^2))^2-\tfrac{9}{4}m(m+16n^2) & \geq(\tfrac{5}{2}(m+16n^2))^2-\tfrac{9}{4}m(m+16n^2) \\ &= 4(m+16n^2)(m+25n^2)>0.
  \end{align*}
From (\ref{E:Selmer 2b}), we obtain $dy^2>0$ which contradicts that $d<0$.  Therefore, $d>0$.

Now suppose that $d$ is not a square modulo the prime $p:=m$.    The model (\ref{E:Selmer 2a}) has no $\QQ_p$-point at infinity since $d$ is not a square in $\QQ_p$. Choose a pair $(x,y)\in \QQ_p^2$ satisfying (\ref{E:Selmer 2a}).   First suppose that $x\in \ZZ_p$.   By (\ref{E:Selmer 2b}), we have $dy^2=z^2-\tfrac{9}{4}(m+16n^2)p$ with $z\in \ZZ_p$.   If $z\in \ZZ_p^\times$, then $dy^2\equiv z^2 \pmod{p}$ with $z\not\equiv 0 \pmod{p}$ which contradicts that $d$ is not a square modulo $p$.  If $z\in p\ZZ_p$, then $dy^2 \equiv -\tfrac{9}{4}(m+16n^2)p \pmod{p^2}$ which implies that the even integer $2\ord_p(y)=\ord_p(y^2)$ is equal to $1$.  So we must have $x\notin \ZZ_p$.    Define $e:=-\ord_p(x) \geq 1$.   We have $\ord_p(d x^4)=-4e$, $\ord_p(5(m+16n^2)x^2)=-2e$ and $\ord_p(4(m+16n^2)(m+25n^2)/d)=0$.  From (\ref{E:Selmer 2a}), we deduce that $\ord_p(y^2)=-4e$ and hence $\ord_p(y)=-2e$.   Multiplying (\ref{E:Selmer 2a}) by $p^{4e}$ gives $(p^{2e} y)^2= d(p^e x)^4-p^{2e}5(m+16n^2)(p^ex)^2 + p^{4e}4(m+16n^2)(m+25n^2)/d$.  Using that $p^{2e}y,p^e x \in \ZZ_p^\times$ and reducing modulo $p$, we find that $d$ is a square modulo $p$ which again is a contradiction.    We conclude that $d$ is a square modulo $p$.

  By Lemma~\ref{L:easy d}, $d$ divides $4(m+16n^2)(m+25n^2)$.  Since $d>0$, we deduce that $d$ lies in the subgroup of $\QQ^\times/(\QQ^\times)^2$ generated by $2$, $m+16n^2$ and $m+25n^2$.   Set $p:=m$.  We have $\legendre{2}{p}=-1$ since $p=m\equiv 3 \pmod{8}$.  We have $\legendre{m+16n^2}{p}=\legendre{16n^2}{p}=1$ and $\legendre{m+25n^2}{p}=\legendre{25n^2}{p}=1$.  Using these Legendre symbols with $d$ being a square modulo $p$, we conclude that $d$ lies in the set $\{1,m+16n^2, m+25n^2, (m+16n^2)(m+25n^2)\}$.
\end{proof}

\subsection{Computation of the weak Mordell--Weil group}

\begin{lemma} \label{L:almost weak MW}
\begin{romanenum}
\item \label{L:almost weak MW i}
The group $E'(\QQ)/\phi(E(\QQ))$ is isomorphic to $(\ZZ/2\ZZ)^2$ and is generated by the points $Q_0$ and $Q_1$.
\item \label{L:almost weak MW ii}
The group $E(\QQ)/\hat\phi(E'(\QQ))$ is isomorphic to $(\ZZ/2\ZZ)^2$ and is generated by the points $P_0$ and $P_1$.
\end{romanenum}
\end{lemma}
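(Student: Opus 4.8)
The plan is to play the explicit rational points off against the Selmer bounds already obtained.  Descent gives the injective homomorphisms
\[
\delta\colon E'(\QQ)/\phi(E(\QQ)) \hookrightarrow \Sel_\phi(E/\QQ) \qquad\text{and}\qquad \delta'\colon E(\QQ)/\hat\phi(E'(\QQ)) \hookrightarrow \Sel_{\hat\phi}(E'/\QQ),
\]
while Lemmas~\ref{L:Selmer 1} and~\ref{L:Selmer 2} give $\lvert \Sel_\phi(E/\QQ)\rvert\le 4$ and $\lvert \Sel_{\hat\phi}(E'/\QQ)\rvert\le 4$.  Since each Selmer group is a subgroup of $\QQ^\times/(\QQ^\times)^2$, it is an elementary abelian $2$-group.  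Hence it suffices to exhibit, in each case, four distinct square classes in the image of $\delta$ (respectively $\delta'$): this forces $\delta$ and $\delta'$ to be isomorphisms onto their Selmer groups, forces those Selmer groups to have order exactly $4$, and identifies all four groups with $(\ZZ/2\ZZ)^2$.  The generation statements then come for free, since $\delta$ and $\delta'$ are injective, so any points whose images generate the target generate the source.

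For part~(i), I would evaluate $\delta$ on $Q_0$ and $Q_1$ using the formulae of \S\ref{S:background}.  As $b'=9m(m+16n^2)$ and $9$ is a square, $\delta(Q_0)=\delta((0,0))$ is the class of $m(m+16n^2)$; and since $Q_1=(-(m+16n^2),12n(m+16n^2))$ is neither $0$ nor $(0,0)$, its image $\delta(Q_1)$ is the class of $-(m+16n^2)$.  Because $\delta$ is a homomorphism, $\delta(Q_0+Q_1)$ is the class of $-m(m+16n^2)^2$, i.e.\ of $-m$.  The classes of $-1$, $m$, $m+16n^2$ are independent in $\QQ^\times/(\QQ^\times)^2$ (as $m$ and $m+16n^2$ are distinct primes), so $1$, $m(m+16n^2)$, $-(m+16n^2)$, $-m$ are four distinct elements; thus $\lvert E'(\QQ)/\phi(E(\QQ))\rvert\ge 4$, and the reduction above finishes part~(i), with $Q_0,Q_1$ generating because their $\delta$-images already do.

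Part~(ii) is the same argument with $\delta'$, $P_0$, $P_1$ and Lemma~\ref{L:Selmer 2}.  Now $b=4(m+16n^2)(m+25n^2)$ with $4$ a square, so $\delta'(P_0)=\delta'((0,0))$ is the class of $(m+16n^2)(m+25n^2)$; and $P_1=(m+16n^2,6n(m+16n^2))$, again neither $0$ nor $(0,0)$, has $\delta'(P_1)$ the class of $m+16n^2$, whence $\delta'(P_0+P_1)$ is the class of $m+25n^2$.  Since $m+16n^2$ and $m+25n^2$ are distinct primes, the classes $1$, $(m+16n^2)(m+25n^2)$, $m+16n^2$, $m+25n^2$ are distinct, giving $\lvert E(\QQ)/\hat\phi(E'(\QQ))\rvert\ge 4$, and we conclude as before.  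I do not expect a genuine obstacle in this lemma: the hard inputs are the Selmer bounds of Lemmas~\ref{L:Selmer 1} and~\ref{L:Selmer 2} and the explicit points $Q_0,Q_1,P_0,P_1$.  The only things to check are that $Q_1$ and $P_1$ avoid the special points $0$ and $(0,0)$ on their curves (so that the formula $\delta(x,y)=x\,(\QQ^\times)^2$ applies), and that $m$, $m+16n^2$, $m+25n^2$ are pairwise distinct primes — which holds since $m>5$ and $n\ge 1$ force $m<m+16n^2<m+25n^2$.
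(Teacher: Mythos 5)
Your proposal is correct and follows essentially the same route as the paper: evaluate the descent maps $\delta$ and $\delta'$ at the explicit points $Q_0,Q_1$ and $P_0,P_1$, and play the resulting square classes off against the four-element Selmer bounds of Lemmas~\ref{L:Selmer 1} and~\ref{L:Selmer 2} to conclude that both quotients are $(\ZZ/2\ZZ)^2$ generated by those points. The only cosmetic difference is that you verify the four image classes are pairwise distinct explicitly, whereas the paper simply observes that the two computed classes force the inclusions into the Selmer bound to be equalities.
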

\begin{proof}
As noted in \S\ref{S:background}, we have a group homomorphism $\delta\colon E'(\QQ)\to \QQ^\times/(\QQ^\times)^2$ whose kernel equals $\phi(E(\QQ))$.   By Lemma~\ref{L:Selmer 1}, we have inclusions
\begin{align} \label{E:inclusions Sel 1}
\delta(E'(\QQ))\subseteq \Sel_\phi(E/\QQ) \subseteq \{1,-m, -(m+16n^2), m(m+16n^2)\}.
\end{align}
The inclusions of groups in (\ref{E:inclusions Sel 1}) are in fact equalities since $\delta(Q_0)= b'\cdot (\QQ^\times)^2=m(m+16n^2)\cdot (\QQ^\times)^2$ and $\delta(Q_1)=-(m+16n^2)\cdot (\QQ^\times)^2$.   In particular, $\delta(E'(\QQ))$ is isomorphic to $(\ZZ/2\ZZ)^2$ and is generated by $\delta(Q_0)$ and $\delta(Q_1)$.  Part (\ref{L:almost weak MW i}) is now immediate. 

Similarly, we have a group homomorphism $\delta'\colon E(\QQ)\to \QQ^\times/(\QQ^\times)^2$ whose kernel equals $\hat\phi(E'(\QQ))$. By Lemma~\ref{L:Selmer 2}, we have inclusions
\begin{align} \label{E:inclusions Sel 2}
\delta'(E(\QQ))\subseteq \Sel_{\hat\phi}(E'/\QQ) \subseteq \{1, m+16n^2, m+25n^2, (m+16n^2)(m+25n^2)\}.
\end{align}
The inclusions of groups in (\ref{E:inclusions Sel 2}) are in fact equalities since $\delta(P_0)= b\cdot (\QQ^\times)^2=(m+16n^2)(m+25n^2)\cdot (\QQ^\times)^2$ and $\delta(P_1)=(m+16n^2)\cdot (\QQ^\times)^2$.   In particular, $\delta'(E(\QQ))$ is isomorphic to $(\ZZ/2\ZZ)^2$ and is generated by $\delta'(P_0)$ and $\delta'(P_1)$.  Part (\ref{L:almost weak MW ii}) is now immediate. 
\end{proof}

\begin{lemma} \label{L:torsion}
The torsion subgroup of $E(\QQ)$ is the cyclic group of order $2$ generated by $P_0$.
\end{lemma}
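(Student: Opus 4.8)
The plan is to pin down $E(\QQ)[2]$ from the defining equation, bound $\#E(\QQ)_{\tors}$ via the reduction of $E$ at the bad prime $3$, and then eliminate a point of order $4$ with the descent homomorphism $\delta'$ of \S\ref{S:background}. First I would determine $E(\QQ)[2]$: apart from $O$, the rational $2$-torsion points are the $(e,0)$ with $e$ a rational root of $x\big(x^2-5(m+16n^2)x+4(m+16n^2)(m+25n^2)\big)$, and the quadratic factor has discriminant $9m(m+16n^2)=b'$, which is not a perfect square because $m$ and $m+16n^2$ are distinct primes. Hence $0$ is the only rational root of the cubic, $E(\QQ)[2]=\{O,P_0\}$, and in particular the $2$-primary part of $E(\QQ)_{\tors}$ is cyclic.

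Next I would analyze the reduction of $E$ at $3$. One computes $c_4=16(m+16n^2)(13m+100n^2)$, and since $13m+100n^2\equiv m+16n^2\pmod 3$ while $m+16n^2$ is a prime $\neq 3$, one gets $\ord_3(c_4)=0$; together with $\ord_3(\Delta)=2$ and the minimality of the given model at $3$, this shows that $E$ has multiplicative reduction of Kodaira type $\operatorname{I}_2$ at $3$. The reduction map embeds $E(\QQ)_{\tors}$ into $E(\QQ_3)_{\tors}$, and since $\widehat{E}(3\ZZ_3)$ is torsion-free, $E(\QQ_3)_{\tors}$ embeds into $E(\QQ_3)/\widehat{E}(3\ZZ_3)$, a group whose order is the Tamagawa number at $3$ (here $2$) times $\#\widetilde{E}^{\,\mathrm{sm}}(\FF_3)\in\{3-1,\,3+1\}$. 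So $\#E(\QQ)_{\tors}$ is a power of $2$ dividing $8$, and combined with the first step, $E(\QQ)_{\tors}$ is cyclic of order $2$, $4$, or $8$.

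Finally I would rule out order $4$. By \S\ref{S:background} we have $\delta'(P_0)=b\cdot(\QQ^\times)^2=(m+16n^2)(m+25n^2)\cdot(\QQ^\times)^2$, which is nontrivial since $m+16n^2$ and $m+25n^2$ are distinct primes, so $P_0\notin\ker\delta'=\hat\phi(E'(\QQ))$. As $2E(\QQ)=\hat\phi(\phi(E(\QQ)))\subseteq\hat\phi(E'(\QQ))$, it follows that $P_0\notin 2E(\QQ)$. If $E(\QQ)_{\tors}$ contained an element $Q$ of order $4$, then $2Q$ would be the unique $2$-torsion point $P_0$, contradicting $P_0\notin 2E(\QQ)$. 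This excludes $\ZZ/4\ZZ$ and $\ZZ/8\ZZ$, leaving $E(\QQ)_{\tors}=\langle P_0\rangle\cong\ZZ/2\ZZ$.

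The step I expect to be the main obstacle is the order bound: one has to notice that the reduction at the bad prime $3$ is \emph{multiplicative} of type $\operatorname{I}_2$, which is what simultaneously kills the prime-to-$2$ torsion and hence the remaining Mazur possibilities $\ZZ/6\ZZ$, $\ZZ/10\ZZ$, $\ZZ/12\ZZ$. Reducing only at primes of good reduction is not obviously enough: $\#E(\FF_5)$ lies in $\{4,8,10\}$ and can equal $10$ (and $\#E(\FF_7)$ can also equal $10$), so the case $E(\QQ)_{\tors}\cong\ZZ/10\ZZ$ would not be eliminated by $\bmod\,5$ and $\bmod\,7$ point counts alone.
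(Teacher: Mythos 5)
Your proof is correct, but it takes a genuinely different route from the paper's at both key steps, so it is worth comparing. To show $E(\QQ)_{\tors}$ is a $2$-group, the paper works at the prime $2$: it observes $\tilde{E}_{\operatorname{ns}}(\FF_2)$ is trivial (so $E_1(\QQ_2)=E_0(\QQ_2)$) and runs Tate's algorithm at $2$ to see the Kodaira type is $\operatorname{I}_r^*$ with $|E(\QQ_2)/E_0(\QQ_2)|\in\{2,4\}$; you instead work at $3$, where $\ord_3(\Delta)=2$ and $\ord_3(c_4)=0$ (your computation $c_4=16(m+16n^2)(13m+100n^2)$ and the congruence argument are right, since $m\equiv m+16n^2\equiv 2\pmod 3$ forces $3\mid n$) give multiplicative reduction of type $\operatorname{I}_2$, and the bound $\#\bigl(E(\QQ_3)/E_1(\QQ_3)\bigr)=c_3\cdot\#\tilde{E}^{\sm}(\FF_3)\in\{4,8\}$ does the same job with less case analysis (minor quibbles: what you use is just the inclusion $E(\QQ)\subseteq E(\QQ_3)$, not a ``reduction map,'' and you only need $c_3\le 2$, not its exact value). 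To exclude points of order $4$ or $8$, the paper invokes Lemma~\ref{L:almost weak MW}(\ref{L:almost weak MW ii}) --- hence ultimately the Selmer bound of Lemma~\ref{L:Selmer 2} --- to get $P_0\notin\hat\phi(E'(\QQ))\supseteq 2E(\QQ)$, whereas you get the same conclusion directly from the formula of \S\ref{S:background}, $\delta'(P_0)=b\cdot(\QQ^\times)^2=(m+16n^2)(m+25n^2)\cdot(\QQ^\times)^2\neq 1$, since $m+16n^2$ and $m+25n^2$ are distinct primes; this makes your torsion argument independent of the Selmer computations, a small gain in self-containedness, while the paper's choice is natural given that Lemma~\ref{L:almost weak MW} is already available at that point. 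Your determination of $E(\QQ)[2]=\{O,P_0\}$ via the non-square discriminant $9m(m+16n^2)$ of the quadratic factor matches the paper's remark that the remaining $2$-torsion is defined over $\QQ(\sqrt{m(m+16n^2)})$, and the reduction from ``$2$-group with cyclic $2$-torsion plus $P_0\notin 2E(\QQ)$'' to ``order exactly $2$'' is sound, so the argument is complete.
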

\begin{proof}
The Weierstrass model defining our elliptic curve is minimal at $2$ since it has coefficients in $\ZZ$ and its discriminant $\Delta\in \ZZ$ satisfies $\ord_2(\Delta)=8<12$.  Let $\tilde{E}$ be the (singular) curve over $\FF_2$ obtained by reducing this model modulo $2$.  Let $\tilde{E}_{\operatorname{ns}}$ be the open subvariety of $\tilde{E}$ consisting of nonsingular points; it is a commutative group variety by making use of group operations coming from $E$.   Let $E_0(\QQ_2)$ and $E_1(\QQ_2)$ be the subgroups of $E(\QQ_2)$ consisting of those points whose reduction modulo $2$ lies in $\tilde{E}_{\operatorname{ns}}(\FF_2)$ or the identity subgroup of $\tilde{E}_{\operatorname{ns}}(\FF_2)$, respectively.    By \cite[Propositions VII.2.1 and IV.3.2]{Silverman}, any torsion element in $E_1(\QQ_2)$ has order equal to a power of $2$.   The set $\tilde{E}_{\operatorname{ns}}(\FF_2)$ consist of the single point $(1,0)$, so we have $E_1(\QQ_2)=E_0(\QQ_2)$.   

The order of the group $E(\QQ_2)/E_0(\QQ_2)$ can be computed using Tate's algorithm.  Replacing $y$ by $y+x$, we obtain an alternate model for $E$ given by
\[
y^2+2xy = x^3-(5(m+16n^2)-1)x^2+4(m+16n^2)(m+25n^2)x.
\]
With notation as in Tate's algorithm \cite[Algorithm 9.4]{SilvermanII}, we have the following information concerning the basic invariants: $a_1=2$, $a_2=-5(m+16n^2)-1$, $a_3=0$, $a_4=4(m+16n^2)(m+25n^2)$, $a_6=0$, $b_2\equiv 0 \pmod{4}$, $b_4 \equiv 0 \pmod{8}$, $b_6=0$, $b_8\equiv 0 \pmod{16}$.  We have $m\equiv 3 \pmod{4}$ and $n\equiv 0 \pmod{2}$ since $m$ and $m+25n^2$ are both primes that are congruent to $3$ modulo $4$.  Using this, we find that $a_2/2 \equiv 0 \pmod{2}$ and $a_4/4 \equiv 1 \pmod{2}$.  With the above information one can directly check Tate's algorithm \cite[Algorithm 9.4]{SilvermanII} to find that $E$ has Kodaira symbol $\operatorname{I}_r^*$ at $2$ for some $r\geq 1$ and hence $c:=|E(\QQ_2)/E_0(\QQ_2)|$ is $2$ or $4$.

Combining the above results, we deduce that any element of finite order in $E(\QQ_2)$ has order equal to a power of $2$.   This proves that the torsion group $E(\QQ)_{\tors}$ has cardinality a power of $2$.    The only element of order $2$ in $E(\QQ)$ is $P_0=(0,0)$; using the Weierstrass equation of $E$, the other points of order $2$ are defined over the quadratic field $\QQ(\sqrt{m(m+16n^2)})$.  Therefore, $E(\QQ)_{\tors}$ is a cyclic group of order $2^e$ for some $e\geq 1$.   

Suppose that $e\geq 2$ and hence $P_0=2 P$ for some $P\in E(\QQ)$.    We have $P_0= 2 P = \hat\phi(\phi(P)) \in \hat\phi(E'(\QQ))$ which contradicts Lemma~\ref{L:almost weak MW}(\ref{L:almost weak MW ii}).   Therefore, $E(\QQ)_\tors$ is a cyclic group of order $2$ generated by $(0,0)$.
\end{proof}

\begin{lemma} \label{L:WMW}
The group $E(\QQ)/2E(\QQ)$ is isomorphic to $(\ZZ/2\ZZ)^3$ and is generated by the points $P_0$, $P_1$ and $P_2$.
\end{lemma}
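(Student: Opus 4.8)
The plan is to feed the groups computed in Lemmas~\ref{L:almost weak MW} and~\ref{L:torsion} into the four-term exact sequence
\[
0 \to E'(\QQ)[\hat\phi]/\phi(E(\QQ)[2])\to E'(\QQ)/\phi(E(\QQ)) \xrightarrow{\hat\phi} E(\QQ)/2E(\QQ)\to E(\QQ)/\hat\phi(E'(\QQ)) \to 0
\]
recalled in \S\ref{S:background}. First I would identify the leftmost term. By Lemma~\ref{L:torsion} we have $E(\QQ)[2]=\{0,P_0\}$, and since $P_0=(0,0)$ generates the kernel $E[\phi]$ of $\phi$, we get $\phi(E(\QQ)[2])=\{0\}$. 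On the other hand $E'(\QQ)[\hat\phi]=E'[\hat\phi]$ is the group of order $2$ generated by $Q_0=(0,0)$, so the leftmost term is cyclic of order $2$.

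Next I would substitute the isomorphisms $E'(\QQ)/\phi(E(\QQ))\cong(\ZZ/2\ZZ)^2$ and $E(\QQ)/\hat\phi(E'(\QQ))\cong(\ZZ/2\ZZ)^2$ from Lemma~\ref{L:almost weak MW}. Exactness of the sequence then gives $|E(\QQ)/2E(\QQ)| = 4\cdot 4/2 = 8$, and since this group is annihilated by $2$ it must be isomorphic to $(\ZZ/2\ZZ)^3$.

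For the generators: the image of $\hat\phi\colon E'(\QQ)/\phi(E(\QQ))\to E(\QQ)/2E(\QQ)$ is generated by the classes of $\hat\phi(Q_0)$ and $\hat\phi(Q_1)$; but $Q_0$ lies in $E'[\hat\phi]$ so $\hat\phi(Q_0)=0$, while $\hat\phi(Q_1)=P_2$ by definition. Hence this image is the order-$2$ group generated by the class of $P_2$, and by exactness it is exactly the kernel of the natural surjection $E(\QQ)/2E(\QQ)\twoheadrightarrow E(\QQ)/\hat\phi(E'(\QQ))$ (which makes sense because $2P=\hat\phi(\phi(P))$ for all $P\in E(\QQ)$). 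By Lemma~\ref{L:almost weak MW}(\ref{L:almost weak MW ii}) this quotient is generated by the images of $P_0$ and $P_1$, so lifting those to $E(\QQ)/2E(\QQ)$ and adjoining $P_2$ shows that $P_0$, $P_1$, $P_2$ generate $E(\QQ)/2E(\QQ)$; as this group has order $8$, the three classes form a basis. I do not expect a genuine obstacle here — the argument is a short diagram chase together with an order count — and the only point requiring a little care is the correct evaluation of the term $E'(\QQ)[\hat\phi]/\phi(E(\QQ)[2])$, which is precisely where Lemma~\ref{L:torsion} (and the fact that $P_0$ lies in $E[\phi]$) is used.
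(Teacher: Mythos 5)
Your proposal is correct and follows the paper's proof essentially verbatim: the same four-term exact sequence, the same identification of $E'(\QQ)[\hat\phi]/\phi(E(\QQ)[2])$ as order $2$ via Lemma~\ref{L:torsion} and $\phi(P_0)=0$, the same cardinality count giving $|E(\QQ)/2E(\QQ)|=8$, and the same observation that $\hat\phi(Q_0)=0$ and $\hat\phi(Q_1)=P_2$ to get the generators $P_0$, $P_1$, $P_2$. No issues.
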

\begin{proof}
Using that $\hat\phi\circ \phi = [2]$, we have a short exact sequence
\[
0 \to E'(\QQ)[\hat\phi]/\phi(E(\QQ)[2])\to E'(\QQ)/\phi(E(\QQ)) \xrightarrow{\hat\phi} E(\QQ)/2E(\QQ)\to E(\QQ)/\hat\phi(E'(\QQ)) \to 0.
\]
Using Lemma~\ref{L:torsion} and $\phi(P_0)=0$, we find that the group $E'(\QQ)[\hat\phi]/\phi(E(\QQ)[2])$ has order $2$ and is generated by $Q_0=(0,0)$.   Using Lemma~\ref{L:almost weak MW} and considering the cardinalities of the groups in the short exact sequence, we deduce that $|E(\QQ)/2E(\QQ)|=8$ and hence $E(\QQ)/2E(\QQ)\cong (\ZZ/2\ZZ)^3$.

Using Lemma~\ref{L:almost weak MW} with the short exact sequence, we find that $E(\QQ)/2E(\QQ)$ is generated by $P_0$, $P_1$, $\hat\phi(Q_0)$ and $\hat\phi(Q_1)$.  The lemma follows since $\hat\phi(Q_0)=0$ and $\hat\phi(Q_1)=P_2$.
\end{proof}

Since $E(\QQ)$ is a finitely generated abelian group, Lemmas~\ref{L:torsion} and \ref{L:WMW} imply that $E(\QQ)\cong \ZZ/2\ZZ \times \ZZ^2$.  This completes the proof of Theorem~\ref{T:main examples}.

\section{Proof of Theorem~\ref{T:MAIN}}

The set of primes that are congruent to $11$ modulo $24$ has natural density $1/\varphi(24)=1/8$.  By \cite[Theorem~1.3]{TZ}, there are infinitely many pairs $(m,n)$ of natural numbers for which $m$, $m+16n^2$ and $m+25n^2$ are primes congruent to $11$ modulo $24$.

Fix such a pair $(m,n)$ and let $E/\QQ$ be the elliptic curve defined by the model in Theorem~\ref{T:main examples}.  By Theorem~\ref{T:main examples}, $E(\QQ)$ has rank $2$.   
 
 Let $j_E \in \QQ$ be the $j$-invariant of $E$.  Recall that $j_E$ uniquely determines $E$ up to isomorphism over $\Qbar$.  So to complete the proof of the theorem, it suffices to show that each possible pair $(m,n)$ gives rise to a different $j$-invariant $j_E$.
 
One can verify that 
\[
j_E = \frac{16(13m+100n^2)^3}{3^2m(m+25n^2)^2}.
\]
We now show that this expression for $j_E$ is in lowest terms.
\begin{lemma} \label{L:gcd}
We have $\gcd(16(13m+100n^2)^3,3^2m(m+25n^2)^2)=1$.
\end{lemma}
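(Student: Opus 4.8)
The plan is to show that the numerator $16(13m+100n^2)^3$ and the denominator $3^2 m(m+25n^2)^2$ share no common prime factor, by checking each prime that could possibly divide the numerator. The primes dividing $16(13m+100n^2)^3$ are $2$ and the primes dividing $13m+100n^2$, so it suffices to rule out that $2$, $m$, $3$, or $m+25n^2$ divides $13m+100n^2$, together with showing no prime dividing $13m+100n^2$ divides $3^2m(m+25n^2)^2$.

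The key steps, in order. First I would handle $p=2$: since $m$ is a prime congruent to $11 \pmod{24}$, it is odd, and since $m+25n^2$ is also an odd prime we get $n$ even (as noted already in the proof of Lemma~\ref{L:torsion}); then $13m+100n^2$ is odd, so $2 \nmid 13m+100n^2$. Second, $p=3$: from $m\equiv 11 \equiv 2 \pmod 3$ we compute $13m+100n^2 \equiv m+n^2 \equiv 2+n^2 \pmod 3$, and since $n$ is even but could be $\equiv 0,1,2 \pmod 3$ I need $2+n^2 \not\equiv 0$, i.e. $n^2 \not\equiv 1 \pmod 3$; this fails when $3\nmid n$. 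So instead I should argue differently for $3$: the point is that $3 \nmid 13m+100n^2$ is \emph{not} always true, so I must show that when $3 \mid 13m+100n^2$ it still causes no cancellation — but $3$ divides the denominator, so that approach is wrong too. The correct route: I claim $3 \nmid 13m + 100n^2$. Indeed $13m+100n^2 \equiv m+n^2 \pmod 3$; if $3\mid n$ then this is $\equiv m \equiv 2 \pmod 3$, nonzero; if $3\nmid n$ then $n^2\equiv 1$ and $m+n^2\equiv 2+1\equiv 0 \pmod 3$ — so $3$ \emph{can} divide it. This forces me to reexamine: perhaps the congruence $m\equiv 11\pmod{24}$ combined with $m+16n^2, m+25n^2$ prime forces $3\mid n$. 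Checking: $m+25n^2$ prime and $>3$ means $3\nmid m+25n^2 \equiv m+n^2 \equiv 2+n^2$, so $n^2\not\equiv 1\pmod 3$, hence $3\mid n$. That is the crucial observation. With $3\mid n$ in hand, $13m+100n^2 \equiv m \equiv 2 \pmod 3$, so $3\nmid 13m+100n^2$.

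Third, $p=m$: $13m+100n^2 \equiv 100n^2 \pmod m$, and since $m\nmid n$ (as $m$ is prime and, say, $m > n$ or more carefully $m\nmid 100n^2$ because $m>5$ and $m\nmid n$) and $m\neq 2,5$, we get $m\nmid 13m+100n^2$. Fourth, $p = m+25n^2$: reducing mod this prime, $m\equiv -25n^2$, so $13m+100n^2 \equiv -325n^2+100n^2 = -225n^2 = -9\cdot 25 \cdot n^2 \pmod{m+25n^2}$; since $m+25n^2$ is a prime $>5$ and coprime to $n$ (as $n < m+25n^2$ and it's prime), this is nonzero. Finally, any other prime $p \mid 13m+100n^2$ must be checked against the denominator: such a $p$ is coprime to $2$, $3$, $m$, and $m+25n^2$ by the four computations just done (read in the contrapositive — if $p$ divided one of those it would have been among the cases handled), hence $p \nmid 3^2 m(m+25n^2)^2$. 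Assembling these gives $\gcd(16(13m+100n^2)^3, 3^2m(m+25n^2)^2)=1$.

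The main obstacle I expect is the prime $3$: naively $3$ often divides $13m+100n^2$, and the resolution requires noticing that the hypothesis ``$m+25n^2$ is a prime $\equiv 11 \pmod{24}$, in particular $>3$'' forces $3\mid n$ via the mod-$3$ computation on $m+25n^2$ itself. Once that is spotted the rest is a routine sequence of reductions modulo each candidate prime, using repeatedly that $m$, $m+16n^2$, $m+25n^2$ are primes exceeding $5$ and that $n$ is coprime to each of them.
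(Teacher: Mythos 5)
Your overall strategy is the paper's: since the prime factors of $3^2m(m+25n^2)^2$ are the odd primes $3$, $m$ and $m+25n^2$, it suffices to check that none of them divides $13m+100n^2$, and your mod-$3$ and mod-$(m+25n^2)$ computations match the paper's (the paper deduces $3\mid n$ from $m+16n^2$ being $\equiv 2 \pmod 3$ rather than from $m+25n^2$; either works). The extra bookkeeping about the prime $2$ and about ``any other prime dividing $13m+100n^2$'' is harmless but unnecessary once the reduction is phrased through the denominator's prime factors, which are all odd.

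The one genuine gap is the case $p=m$: you need $m\nmid n$ (so that $m\nmid 100n^2$), and your stated justification --- ``$m$ is prime and, say, $m>n$'' --- is not available, since nothing in the hypotheses forces $m>n$; the fallback phrase ``$m\nmid 100n^2$ because $m>5$ and $m\nmid n$'' simply assumes the coprimality being claimed, and the closing sentence asserts that ``$n$ is coprime to each of them'' without proof (the analogous claim for $m+25n^2$ is fine, since $n<m+25n^2$, but that size argument does not transfer to $m$). The missing one-line argument is the paper's: if $m\mid n$ then $m$ divides $m+16n^2$ (equivalently $m+25n^2$), which is a prime strictly larger than $m$, a contradiction. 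With that inserted, your proof is complete and coincides with the paper's.
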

\begin{proof}
The primes $3$, $m$ and $m+25n^2$ are odd, so we need only verify that they do not divide $13m+100n^2$.   

We have $n\equiv 0\pmod{3}$ since $m$ and $m+16n^2$ are both congruent to $2$ modulo $3$.  Therefore, $13m+100n^2\equiv m\equiv 2\pmod{3}$ and hence $3\nmid (13m+100n^2)$.

Suppose that $m$ divides $13m+100n^2$.  We have $0\equiv 13m+100n^2 \equiv 100n^2 \pmod{m}$ and hence $m$ divides $n$ since $m>5$.   However, this is impossible since $m$ and $m+16n^2$ are both prime.  Therefore, $m \nmid (13m+100n^2)$.

Finally suppose that $p:=m+25n^2$ divides $13m+100n^2$.  We have $0\equiv 13m+100n^2 \equiv 13(-25n^2)+100n^2 = -225 n^2\pmod{p}$ and hence $p$ divides $n$ since $p>5$.   However, this is impossible since $n<p$.  Therefore, $p \nmid (13m+100n^2)$.
\end{proof}

By Lemma~\ref{L:gcd}, the denominator of $j_E$ is $3^2m(m+25n^2)^2$.   Therefore, $m$ and $m+25n^2$ are the two largest primes dividing the denominator of $j_E$.  In particular, we can recover the pair $(m,n)$ from the $j$-invariant of $E$.

\begin{bibdiv}
\begin{biblist}

\bib{MR3492814}{article}{
   author={Byeon, Dongho},
   author={Jeong, Keunyoung},
   title={Infinitely many elliptic curves of rank exactly two},
   journal={Proc. Japan Acad. Ser. A Math. Sci.},
   volume={92},
   date={2016},
   number={5},
   pages={64--66},
   issn={0386-2194},
   review={\MR{3492814}},
   doi={10.3792/pjaa.92.64},
}

\bib{MR3960281}{article}{
   author={Jeong, Keunyoung},
   title={Infinitely many elliptic curves of rank exactly two II},
   journal={Proc. Japan Acad. Ser. A Math. Sci.},
   volume={95},
   date={2019},
   number={6},
   pages={53--57},
   issn={0386-2194},
   review={\MR{3960281}},
   doi={10.3792/pjaa.95.53},
}

\bib{MR488287}{article}{
   author={Mazur, Barry},
   title={Modular curves and the Eisenstein ideal},
   note={With an appendix by Mazur and M. Rapoport},
   journal={Inst. Hautes \'{E}tudes Sci. Publ. Math.},
   number={47},
   date={1977},
   pages={33--186 (1978)},
   issn={0073-8301},
   review={\MR{488287}},
}

\bib{MR870738}{article}{
   author={Satg\'{e}, Philippe},
   title={Un analogue du calcul de Heegner},
   language={French},
   journal={Invent. Math.},
   volume={87},
   date={1987},
   number={2},
   pages={425--439},
   issn={0020-9910},
   review={\MR{870738}},
   doi={10.1007/BF01389425},
}

\bib{Silverman}{book}{
   author={Silverman, Joseph H.},
   title={The arithmetic of elliptic curves},
   series={Graduate Texts in Mathematics},
   volume={106},
   edition={2},
   publisher={Springer, Dordrecht},
   date={2009},
   pages={xx+513},
   isbn={978-0-387-09493-9},
   review={\MR{2514094}},
   doi={10.1007/978-0-387-09494-6},
}

\bib{SilvermanII}{book}{
   author={Silverman, Joseph H.},
   title={Advanced topics in the arithmetic of elliptic curves},
   series={Graduate Texts in Mathematics},
   volume={151},
   publisher={Springer-Verlag, New York},
   date={1994},
   pages={xiv+525},
   isbn={0-387-94328-5},
   review={\MR{1312368}},
   doi={10.1007/978-1-4612-0851-8},
}

\bib{MR703488}{article}{
   author={Silverman, Joseph H.},
   title={Heights and the specialization map for families of abelian
   varieties},
   journal={J. Reine Angew. Math.},
   volume={342},
   date={1983},
   pages={197--211},
   issn={0075-4102},
   review={\MR{703488}},
   doi={10.1515/crll.1983.342.197},
}

\bib{TZ}{article}{
   author={Tao, Terence},
   author={Ziegler, Tamar},
   title={The primes contain arbitrarily long polynomial progressions},
   journal={Acta Math.},
   volume={201},
   date={2008},
   number={2},
   pages={213--305},
   issn={0001-5962},
   review={\MR{2461509}},
   doi={10.1007/s11511-008-0032-5},
}

\bib{Zyw25}{article}{
	author={Zywina, David},
	title={An elliptic surface with infinitely many fibers for which the rank does not jump},
	date={2025},
	note={arXiv:2502.01026}
}	

\end{biblist}
\end{bibdiv}

\end{document}